\newcounter{mycounter}
\newtheorem{definition}{Definition}[section]
\newtheorem{theorem}[definition]{Theorem}
\begin{document}
\begin{center}
\ \\
\ \\
\ \\
{\bf Weak solutions for a system of quasilinear elliptic equations}\ \\
\ \\
M. A. Ragusa$^a$ and A. Razani$^b$ \\ \ \\
$^a$Department of Mathematics, University of Catania, Viale
Andrea Doria No. 6, Catania 95128, Italy and RUDN University, Miklukho-Maklaya Str. 6, Moscow, 117198, Russia.
E-Mail: maragusa@dmi.unict.it\\
$^b$Department of Pure Mathematics, Faculty of Science, Imam Khomeini International University,
Postal code: 34149-16818, Qazvin, Iran. E-Mail: razani@sci.ikiu.ac.ir \\
\ \\
\ \\
\begin{minipage}[pos]{12cm}
\indent A system of quasilinear elliptic equations on an unbounded domain is considered.
The existence of a sequence of radially symmetric weak solutions is proved via variational methods.
\end{minipage}
\end{center}
\ \\
\ \\
{\it \bf Keywords:} Sequence of solutions, elliptic problem, $p$-Laplacian, variational methods.
\ \\
{\it \bf $2010$ Mathematics subject classification:} 35J20, 34B10, 35J50.\\
\ \\
\section{Introduction}
We consider the following problem
\begin{equation}\label{eq1.1}
\begin{cases}
-\Delta_p u+|u|^{p-2}u=\lambda \alpha_1(x) f_1(v) & \text{in $\mathbb{R}^N$},\\
 -\Delta_q v+|v|^{q-2}v=\lambda \alpha_2(x) f_2(u) & \text{in $\mathbb{R}^N$},\\
 u,v\in W^{1,p}(\mathbb{R}^N),
\end{cases}
\end{equation}
where $p,q> N > 1$. We assume that $f_1,f_2:\mathbb{R}\to \mathbb{R}$ are continuous functions,
$\alpha_1(x), \alpha_2(x)\in L^1(\mathbb{R}^N)\cap L^{\infty}(\mathbb{R}^N)$ are nonnegative (not identically zero) radially symmetric maps,
and $\lambda $ is a  real parameter.
Also $\Delta_pu:=\mathrm{div}(|\nabla u|^{p-2}\nabla u)$ denotes the $p$-Laplacian operator.

Partial differential equations's  is used to model a wide variety
of physically significant problems arising in every different areas such as physics, engineering and other
applied disciplines (see \cite{Ehsani,hesaraki01,hesaraki02,Lindgren,Mokhtar,Mokhtarp,Polidoro2008, pournaki,pournakir,RaggusaDMJ,Ragusa2020,Razani2002,Razani2002JMAA,Razani2004JMAA, Razani2004BAMS,Razani2007,Razani2012,Razani2014, Razani2018,Razani2019,razanigo,ScapllatoBVP}). Sobolev spaces play an important role in the theory of partial differential equations as well as
Orlicz-Morrey space and  $\dot{B}_{\infty,\infty}^{-1}$ space (see \cite{BGR,GalaApplicable,GalaAIMS, Gala2016, R2001, RHERZ,Ragusa2020}). Laplace equation is the prototype for linear elliptic equations. This equation has a non-linear counterpart, the so-called $p$-Laplace
equation (see \cite{BehboudiFilomat,Chen, KhalkhaliInfinitely, Khalkhali2013, MahdaviFilomat,MakvandTJM,MakvandCKMS,MakvandFilomat,MakvandGMJ,Yang}).

Here, by inspiration of \cite{MakvandMJM}, we prove the existence of a sequence of radially symmetric weak solutions for
\eqref{eq1.1} in the unbounded domain $\mathbb{R}^N$.

The solution of \eqref{eq1.1} belongs to the product space
\[
W^{1, (p,q)}(\mathbb{R}^N)=W^{1,p}(\mathbb{R}^N)\times W^{1,q}(\mathbb{R}^N)
\]
equipped with the norm
$\|(u,v)\|_{(p,q)}=\|u\|_p+\|u\|_q$.
\begin{definition}
For fixed $\lambda_1$ and $\lambda_2$,  $((u,v):\mathbb{R^N}\rightarrow\mathbb{R}$ is said to be a weak solution of \eqref{eq1.1},
if $(u,,v) \in W^{1,(p,q)}(\mathbb{R}^N)$ and for every $(z,w)\in W^{1,(p,q)}(\mathbb{R}^N)$
\[
\begin{array}{rl}
&-(\int_{\mathbb{R}^N}|\nabla
u(x)|^{p-2}\nabla u(x).\nabla z(x)dx-(\int_{\mathbb{R}^N}|\nabla
u(x)|^{q-2}\nabla u(x).\nabla w(x)dx\\ & \\
&+\int_\mathbb{R^N}|u(x)|^{p-2}u(x)z(x)dx+\int_\mathbb{R^N}|v(x)|^{q-2}v(x)w(x)dx\\ & \\
&\qquad -\lambda_1\int_\mathbb{R^N}\alpha_1(x)f_1(v(x))z(x)dx-\lambda_2\int_\mathbb{R^N}\alpha_2(x)f_2(u(x))w(x)dx=0,
\end{array}
\]
where
\[
\begin{array}{rl}
\|(u,v)\|_{W^{1,(p,q)}(\mathbb{R}^N)}:=&\left(\int_\mathbb{R^N} |\nabla u(x)|^p dx + \int_\mathbb{R^N}|u(x)|^pdx\right)\\
& \qquad +\left(\int_\mathbb{R^N} |\nabla v(x)|^q dx + \int_\mathbb{R^N}|v(x)|^qdx\right).
\end{array}
\]
\end{definition}

Later no, note that the critical points of a energy functional are exactly the weak solutions of \eqref{eq1.1}.

Morrey's theorem, implies the continuous embedding
\begin{equation}\label{embedding}
W^{1,(p,q)}(\mathbb{R^N})\hookrightarrow L^{\infty}(\mathbb{R^N})\times L^{\infty}(\mathbb{R^N}),
\end{equation}
which says that there exists $c$ (depends on $p,q,N$), such that
$\|(u,v)\|_{\infty}\leq c\|(u,v)\|_{W^{1,(p,q)}(\mathbb{R}^N)}$, for every $(u,v) \in W^{1,p}(\mathbb{R^N})\times W^{1,q}(\mathbb{R^N})$, where $\|(u,v)\|_\infty:=\max \{\|u\|_\infty,\|v\|_\infty\}$. Since in the low-dimensional case, every function $(u,v)\in W^{1,(p,q)}(\mathbb{R}^N)$
admits a continuous representation (see \cite[p.166]{Brezis}). In the sequel we will replace $(u,v)$ by this element.

We need the following notations (see \cite{candito} or \cite{Li} for more details):
\begin{itemize}
\item[(I)] $O(N)$ stands for the orthogonal group of $R^N$.
\item[(II)] $B(0,s)$ denotes the open $N$-dimensional ball of center zero, radius $s>0$ and standard Lebesgue measure, $meas(B(0,s))$.
\item[(III)] $\|\alpha\|_{B(0,\frac{s}{2})}:=\int_{B(0,\frac{s}{2})}\alpha(x) dx$.
\end{itemize}

\begin{definition}\

\begin{itemize}
\item A function $h : \mathbb{R}^N\to \mathbb{R}$ is radially symmetric if $h(gx) = h(x)$, for every
$g\in O(N)$ and $x\in \mathbb{R}^N$.
\item
Let $G$ be a topological group. A continuous map $\xi:G\times X\to X:(g,x)\to \xi(g,u):=gu$, is called the action of $G$ on the Banach space $(X,\|.\|_X)$ if
\[
1u=u,\qquad (gm)u=g(mu),\qquad u\mapsto gu\ \text{is linear}.
\]
\item  The action is said to be isometric if $\|gu\|_X=\|u\|_X$, for every $g\in G$.
\item  The space of $G$-invariant points is defined by
\[
Fix(G):=\{ u\in X: gu=u, \text{for all}\ {g\in G}\}.
\]
\item A map $m:X\to R$ is said to be $G$-invariant if $mog=m$ for every $g\in G$.
\end{itemize}
\end{definition}
The following theorem is important to study the critical point of the functional.
\begin{theorem}\label{palais}
(Palais (1979)) Assume that the action of the topological group $G$ on the Banach space $X$ is isometric.
If $J\in C^1(X:\mathbb{R})$ is $G$-invariant
and if $u$ is a critical point of $J$ restricted to $Fix(G)$, then $u$ is a critical point of $J$.
\end{theorem}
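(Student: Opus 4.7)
The plan is to show that the Fréchet derivative $J'(u)\in X^{*}$ annihilates every $v\in X$, not merely every $w\in Fix(G)$. The argument splits into a chain-rule step, which shows that $J'(u)$ is itself $G$-invariant, and an averaging step, which turns $G$-invariance of the functional plus vanishing on $Fix(G)$ into vanishing on all of $X$.

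First I would differentiate the invariance relation. For each fixed $g\in G$ the map $x\mapsto gx$ is a bounded linear (in fact isometric) operator on $X$, and the identity $J(gx)=J(x)$ holds throughout $X$. The chain rule yields $J'(gx)\circ g=J'(x)$ in $X^{*}$ for every $x\in X$. Specializing to $x=u$ and using $gu=u$ gives
\[
J'(u)(gv)\;=\;J'(u)(v)\qquad\text{for every } g\in G,\ v\in X,
\]
so $J'(u)$ is a $G$-invariant element of $X^{*}$ under the dual action (which is well defined because the action on $X$ is isometric).

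The decisive step is an averaging procedure that sends each $v\in X$ to a symmetrized element $\hat v\in Fix(G)$ satisfying $J'(u)(\hat v)=J'(u)(v)$. In the setting relevant to \eqref{eq1.1}, where $G=O(N)$ is a compact Lie group acting isometrically on $W^{1,(p,q)}(\mathbb{R}^N)$, I would define
\[
\hat v\;=\;\int_{G} gv\,d\mu(g),
\]
the Bochner integral against normalized Haar measure $\mu$. Invariance of $\mu$ gives $g_{0}\hat v=\hat v$ for every $g_{0}\in G$, hence $\hat v\in Fix(G)$. By linearity and continuity of $J'(u)$ together with its $G$-invariance,
\[
J'(u)(\hat v)\;=\;\int_{G}J'(u)(gv)\,d\mu(g)\;=\;\int_{G}J'(u)(v)\,d\mu(g)\;=\;J'(u)(v).
\]
Since $u$ is a critical point of the restriction $J|_{Fix(G)}$ and $Fix(G)$ is a closed linear subspace, $J'(u)(\hat v)=0$, and comparing the two displays yields $J'(u)(v)=0$. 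As $v\in X$ was arbitrary, $u$ is a critical point of $J$.

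The main obstacle, and the only non-routine ingredient, is the construction of the averaging operator. It rests on the existence of a finite invariant measure on $G$ (compactness) and on Bochner-integrability of $g\mapsto gv$, which follows from continuity of the action together with the isometry bound $\|gv\|_{X}=\|v\|_{X}$. Once this symmetrization is available, the chain-rule and vanishing steps are essentially formal.
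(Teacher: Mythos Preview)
The paper does not supply its own proof of this theorem: it is quoted as a known result attributed to Palais (1979) and used as a black box in the proofs of Theorems~\ref{theo1} and~\ref{theo12}. So there is no in-paper argument to compare against.

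Your argument is the standard proof of the Principle of Symmetric Criticality in the compact-group case, and it is correct as written. The chain-rule step $J'(u)(gv)=J'(u)(v)$ is exactly what one gets by differentiating $J(gx)=J(x)$ and specializing to $x=u\in Fix(G)$; the Haar-averaging step then produces, for each $v$, an element $\hat v\in Fix(G)$ with the same pairing against $J'(u)$, and the hypothesis that $u$ is critical for $J|_{Fix(G)}$ finishes the job. You are also right that Bochner integrability of $g\mapsto gv$ follows from continuity of the action plus the uniform bound $\|gv\|_X=\|v\|_X$.

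One point worth flagging: the theorem as stated in the paper speaks of an arbitrary \emph{topological} group acting isometrically, whereas your averaging argument requires a finite invariant measure and hence compactness of $G$. Palais' original principle does extend beyond compact groups, but under additional structural hypotheses (e.g.\ a Hilbert-space setting, or conditions ensuring a suitable invariant complement), and not by Haar averaging. You handle this honestly by restricting to $G=O(N)$, which is compact and is precisely the group used in the paper, so for the purposes of this article your proof covers everything that is actually needed.
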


The action of the group $O(N)$ on $W^{1,p}(R^N)$ can be defined by $(gu)(x):=u(g^{-1}x)$, for every
$g\in W^{1,p}(\mathbb{R}^N)$ and $x\in \mathbb{R^N}$.
A computation shows that this group acts linearly and isometrically, which means $\|u\|=\|gu\|$, for every
$g\in O(N)$ and $u\in W^{1,p}(\mathbb{R}^N)$.

\begin{definition}
The subspace of radially symmetric functions of ${W_r^{1,(p,q)}(\mathbb{R}^N)}$ is defined by
\[
\begin{array}{rl}
X:=&W_r^{1,(p,q)}(\mathbb{R}^N)\\
:=& \{ (u,v)\in W^{1,(p,q)}(\mathbb{R}^N): (g_1u,g_2v)=(u,v),\ \text{for all}\ (g_1,g_2)\in O(N)\times O(N)\},
\end{array}
\]
and endowed by the norm

\[
\begin{array}{rl}
\|(u,v)\|_{W_r^{1,(p,q)}(\mathbb{R}^N)}:=&\left(\int_\mathbb{R^N} |\nabla u(x)|^p dx + \int_\mathbb{R^N}|u(x)|^pdx\right)\\
& \qquad +\left(\int_\mathbb{R^N} |\nabla v(x)|^q dx + \int_\mathbb{R^N}|v(x)|^qdx\right).
\end{array}
\]
\end{definition}
In what follows: $\|(u,v)\|_{r}$ denotes $\|(u,v)\|_{W_r^{1,(p,q)}(\mathbb{R}^N)}$.

The following crucial embedding result due to Krist\'{a}ly and principally based on a Strauss-type estimation (see \cite{Strauss})
(Also see \cite[Theorem 3.1]{Kristaly}, \cite{Varga} and \cite{Willem} for related subjects).
\begin{theorem}\label{thcom}
The embedding $W^{1,p}_r(\mathbb{R}^N)\hookrightarrow L^{\infty}(\mathbb{R}^N)$, is compact whenever $2\leq N<p<+\infty$.
\end{theorem}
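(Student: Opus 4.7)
The continuous embedding $W^{1,p}(\mathbb{R}^N)\hookrightarrow L^\infty(\mathbb{R}^N)$ is already furnished by Morrey's theorem together with the density of $C_c^\infty$, which moreover forces the continuous representative to vanish at infinity; the genuine content of the theorem is therefore the \emph{compactness}, which fails without the radial restriction (one can shift a fixed bump to infinity). My plan is to combine a Strauss-type pointwise decay estimate, available only for radial functions, with a diagonal compactness argument on balls. The principal obstacle is to justify rigorously the decay estimate, since one must know \emph{a priori} that the radial representative is continuous on $[0,\infty)$ and tends to zero at infinity before the fundamental-theorem-of-calculus identity used below becomes legitimate; both facts are consequences of the assumption $p>N$ (via Morrey's H\"older embedding and density of $C_c^\infty$ in $W^{1,p}$).

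For the decay estimate, I would work with the continuous radial representative $u(r)$ and start from
\[
|u(r)|^p \;=\; -p\int_r^\infty |u(s)|^{p-2}u(s)u'(s)\,ds \;\leq\; p\int_r^\infty |u(s)|^{p-1}|u'(s)|\,ds.
\]
Multiplying by $r^{N-1}$, using $r^{N-1}\leq s^{N-1}$ under the integral, splitting $s^{N-1}=s^{(N-1)(p-1)/p}\cdot s^{(N-1)/p}$, and applying H\"older with conjugate exponents $p/(p-1)$ and $p$ reduces the right-hand side to (a multiple of) $\|u\|_{L^p(\mathbb{R}^N)}^{p-1}\|\nabla u\|_{L^p(\mathbb{R}^N)}$. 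This produces
\[
|u(x)|\;\leq\; C\,|x|^{-(N-1)/p}\,\|u\|_{W^{1,p}(\mathbb{R}^N)},\qquad x\neq 0,
\]
with the exponent $(N-1)/p$ strictly positive thanks to $N\geq 2$.

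To deduce compactness, given a bounded sequence $(u_n)\subset W^{1,p}_r(\mathbb{R}^N)$, the Rellich--Morrey compact embedding $W^{1,p}(B(0,k))\hookrightarrow C(\overline{B(0,k)})$ (valid for $p>N$, via Arzel\`a--Ascoli applied to the H\"older bound coming from Morrey's theorem) yields, on each fixed ball, a uniformly convergent subsequence. A diagonal extraction then produces a subsequence, still denoted $(u_n)$, converging uniformly on every $B(0,k)$ to some continuous radial function $u$. The decay estimate controls $\sup_{|x|\geq R}|u_n(x)|$ by $CR^{-(N-1)/p}$ uniformly in $n$ (and similarly for $u$), so the tails are uniformly small for large $R$; combining this with uniform convergence on $\overline{B(0,R)}$ produces $\|u_n-u\|_{L^\infty(\mathbb{R}^N)}\to 0$, which is the desired compactness.
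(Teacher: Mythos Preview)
Your argument is correct. The Strauss-type decay estimate is derived cleanly (the hypothesis $N\geq 2$ is exactly what makes the exponent $(N-1)/p$ positive, and $p>N$ is what legitimises the use of the continuous representative vanishing at infinity), and the diagonal extraction combined with the uniform tail control is the standard way to upgrade local compactness to global $L^\infty$-compactness.

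As for the comparison: the paper does \emph{not} supply its own proof of this theorem. It is stated as a quoted result attributed to Krist\'aly \cite[Theorem~3.1]{Kristaly}, with the remark that it is ``principally based on a Strauss-type estimation'' \cite{Strauss}. Your proof is precisely an execution of that Strauss-type strategy, so you are not taking a different route from the paper---you are filling in the details of the very approach the paper points to but does not reproduce.
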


Here we consider the following functionals:
\begin{itemize}
\item $F_i(\xi):=\int_{0}^{\xi}f_i(t)dt$ for every $\xi\in \mathbb{R}$.
\item $\Phi(u,v):=\frac{\|u\|_r^p}{p}+\frac{\|v\|_r^q}{q} $ for every $(u,v) \in X$.
\item $\Psi(u,v):=\int_\mathbb{R^N}\alpha_1F_1(v(x))dx+\int_\mathbb{R^N}\alpha_2F_2(u(x))dx$, for every $(u,v) \in X$.
\item $I_\lambda(u,v):=\Phi(u,v)-\lambda \Psi(u,v)$ for every $(u,v) \in X$.
\end{itemize}

By standard arguments \cite{candito}, we can show that $\Phi$ is G\^ateaux differentiable, coercive and
sequentially weakly lower semicontinuous whose derivative at the point $(u,v) \in X$ is the functional $\Phi'(u,v)\in X^*$ given by
\[
\begin{array}{rl}
\Phi'(u,v)(z,w)=&(\int_\mathbb{R^N}|\nabla u|^{p-2}\nabla u.\nabla z dx + \int_\mathbb{R^N}|u|^{p-2}uz dx)\\
+&(\int_\mathbb{R^N}|\nabla v|^{q-2}\nabla v.\nabla w dx + \int_\mathbb{R^N}|v|^{q-2}vw dx),
\end{array}
\]
for every $(z,w)\in X$. Also standard arguments show that the functional $\Psi_i$ are well defined,
sequentially weakly upper semicontinuous and G\^ateaux differentiable whose G\^ateaux derivative
at the point $(u,v) \in X$ and for every $(z,w)\in X$ is given by,
\[
\Psi'(u,v)(z, w)=\int_\mathbb{R^N}\alpha_1(x)f_1(u(x)) dx+\int_\mathbb{R^N}\alpha_2(x)f_2(v(x)) dx.
\]

\section{Weak solutions}
First we recall  the following theorem \cite[Theorem 2.1]{bonanno}.
\begin{theorem}\label{theo2.1}
Let $X$ be a reflexive real Banach space, let $\Phi,\Psi:X\to \mathrm{R}$ be two G\^ateaux differentiable
functionals such that $\Phi$ is sequentially weakly lower semicontinuous, strongly continuous and coercive,
and $\Psi$ is sequentially weakly upper semicontinuous. For every $r>\inf_X\Phi$, set
\[
\begin{aligned}
\varphi(r)&:=\inf_{\Phi(u)<r}\frac{\sup_{\Phi(v)<r}\Psi(v)-\Psi(u)}{r-\Phi(u)},\\
\gamma&:=\liminf_{r\to+\infty}\varphi(r),\quad \text{and}\quad \delta:=\liminf_{r\to(\inf_X\Phi)^+}\varphi(r).
\end{aligned}
\]
Then the following properties hold:
\begin{itemize}
\item[($a$)] for every $r>\inf_X\Phi$ and every $\lambda \in]0,\frac{1}{\varphi(r)}[$, the restriction of the functional
$ I_\lambda:=\Phi-\lambda \Psi $
to $\Phi^{-1}(]-\infty,r[)$ admits a global minimum, which is a critical point (local minimum) of $I_\lambda$ in $X$.
\item[($b$)] if $\gamma<+\infty$, then for each $\lambda \in]0,\frac{1}{\gamma}[$, the following alternative holds either,
\begin{itemize}
\item[($b_1$)] $I_\lambda$ possesses a global minimum, or
\item[($b_2$)] there is a sequence $\{u_n\}$ of critical points (local minima) of $I_\lambda$ such that
$ \lim_{n\to+\infty}\Phi(u_n)=+\infty$.
\end{itemize}
\item[($c$)] if $\delta<+\infty$, then for each $\lambda \in]0,\frac{1}{\delta}[$, the following alternative holds either:
\begin{itemize}
\item[($c_1$)] there is a global minimum of $\Phi$ which is a local minimum of $I_\lambda$. or,
\item[($c_2$)] there is a sequence $\{u_n\}$ of pairwise distinct critical points (local minima) of $I_\lambda$ which weakly converges to a global minimum of $\Phi$, with $\lim_{n\to+\infty}\Phi(u_n)=\inf_{u\in X}\Phi(u)$.
\end{itemize}
\end{itemize}
\end{theorem}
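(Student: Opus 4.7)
The plan is to establish parts (a), (b), (c) by layering arguments on top of a single core construction: minimizing $I_\lambda$ over a weakly compact sublevel set and then using the arithmetic content of $\varphi(r)<1/\lambda$ to force the minimizer strictly inside the constraint, so that it becomes a local minimum of $I_\lambda$ on all of $X$.

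For part (a), I would first observe that $\Phi^{-1}(]-\infty,r])$ is weakly closed (by sequential weak lower semicontinuity of $\Phi$) and bounded (by coercivity of $\Phi$), hence weakly compact since $X$ is reflexive. Because $\lambda>0$ and $\Psi$ is sequentially weakly upper semicontinuous, $I_\lambda=\Phi-\lambda\Psi$ is sequentially weakly lower semicontinuous and attains its infimum on this compact set at some $u^{\ast}$. The critical step is to verify $\Phi(u^{\ast})<r$ strictly. This uses $\lambda<1/\varphi(r)$: one selects $u_0$ with $\Phi(u_0)<r$ for which the quotient defining $\varphi(r)$ lies below $1/\lambda$, rearranges it into the inequality
\[
I_\lambda(u_0)+\lambda\sup_{\Phi(v)<r}\Psi(v)<r,
\]
and combines this with the weak upper semicontinuity of $\Psi$ applied to a minimizing sequence inside the open sublevel set to rule out the boundary case $\Phi(u^{\ast})=r$. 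Once $u^{\ast}$ is interior in the norm topology, any small $X$-perturbation remains admissible, so $u^{\ast}$ is a local minimum of $I_\lambda$ on $X$ and hence a critical point.

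For part (b), assume $\gamma<+\infty$ and fix $\lambda\in(0,1/\gamma)$. I would extract a sequence $r_n\to+\infty$ with $\varphi(r_n)<1/\lambda$ and apply part (a) at each $r_n$ to obtain a local minimum $u_n$ of $I_\lambda$ with $\Phi(u_n)<r_n$. The dichotomy is then governed by the boundedness of $\{\Phi(u_n)\}$: if it is unbounded, after a subsequence $\Phi(u_n)\to+\infty$, giving ($b_2$); if it stays bounded by some $\bar r$, then from some index on, all $u_n$ are global minimizers of $I_\lambda$ on the common sublevel set $\Phi^{-1}(]-\infty,\bar r+1[)$, so they stabilize at a single point $u^{\ast}$, and coercivity of $\Phi$ combined with $r_n\to+\infty$ forces $I_\lambda(u^{\ast})\leq I_\lambda(v)$ for every $v\in X$, yielding ($b_1$).

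Part (c) follows the same pattern with $r_n\downarrow\inf_X\Phi$: either $\Phi$ attains its infimum at a local minimum of $I_\lambda$ (case $c_1$), or one iteratively chooses strictly decreasing sublevels and thereby produces pairwise distinct critical points $u_n$ with $\Phi(u_n)\to\inf_X\Phi$, whose weak cluster points (guaranteed by coercivity and reflexivity) minimize $\Phi$ by weak lower semicontinuity. The main obstacle I anticipate is extracting \emph{pairwise distinct} critical points in case ($c_2$): one must rule out eventual constancy of the sequence, which is handled by combining the strict inequality $\varphi(r_n)<1/\lambda$ with the strict decrease of the levels $\Phi(u_n)$, so that each $u_{n+1}$ is forced into a sublevel set strictly smaller than the one containing $u_n$.
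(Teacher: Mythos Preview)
The paper does not prove Theorem~\ref{theo2.1}; it is merely recalled from \cite[Theorem~2.1]{bonanno} (Bonanno--Molica Bisci) and then invoked as a black box in the proofs of Theorems~\ref{theo1} and~\ref{theo12}. There is therefore no argument in the paper itself to compare your proposal against.

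For what it is worth, your outline follows the standard route of the original Bonanno--Molica Bisci proof: minimize $I_\lambda$ on the weakly compact sublevel set $\Phi^{-1}(]-\infty,r])$, use the strict inequality $\lambda\,\varphi(r)<1$ to force the minimizer into the open sublevel set $\Phi^{-1}(]-\infty,r[)$ (hence a genuine local minimum of $I_\lambda$ on $X$), and then iterate along a sequence of levels $r_n\to+\infty$ for part~(b) and $r_n\downarrow\inf_X\Phi$ for part~(c). The one place where your sketch is a bit loose is the boundary exclusion in part~(a): to conclude $\Phi(u^{\ast})<r$ you implicitly need $\Psi(u^{\ast})\le\sup_{\Phi(v)<r}\Psi(v)$ even when $\Phi(u^{\ast})=r$, and this is exactly where the sequential weak upper semicontinuity of $\Psi$ (together with the fact that $u^{\ast}$ is a weak limit of points in the open sublevel set) must be invoked carefully. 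Once that is made precise, the rest of the argument goes through as you describe.
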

For fixed $D>0$, set
\[
m(D):=meas(B(0,D))=D^N\frac{\pi^{\frac{N}{2}}}{\Gamma(1+\frac{N}{2})},
\]
where $\Gamma$ is the Gamma function defined by $\Gamma(t):=\int_0^{+\infty}z^{t-1}e^{-z}dz$ for all $t>0$.  Moreover,
\begin{equation}\label{k}
 \Omega:=\max \left\{\frac{m(D)\left(\frac{\sigma(N,p)}{D^p}+g(p,N)\right)}{p\lambda B_1\|\alpha_2\|_{B(0,\frac{D}{2})}},\frac{m(D)\left(\frac{\sigma(N,q)}{D^q}+g(q,N)\right)}{q\lambda B_2\|\alpha_1\|_{B(0,\frac{D}{2})}}\right\}>0,
\end{equation}
where $\sigma(N,p):=2^{p-N}(2^N-1)$, $c= \frac{2p}{2-N}$, $m_1,m_0$ are upper and lower bounds for $M(t)$ in \eqref{eq1.1} and
\[g(p,N):=\frac{1+2^{N+p}N\int_{\frac{1}{2}}^1
t^{N-1}(1-t)^pdt}{2^N}.\]
Assume $\|\cdot\|_1$ denotes the norm of $L^1(\Omega)$ and $F(\xi):=F_1(\xi)+F_2(\xi)$.
\begin{theorem}\label{theo1}
Let $f_i:\mathbb{R}\rightarrow\mathbb{R}$ be two continuous and radially symmetric functions. Set
\[
\begin{array}{rl}
A:=&\underset{(\xi_1,\xi_2)\to +\infty}{\liminf}\frac{\underset{|t_1|\leq \xi_1}{\max}F_2(t_1)}{|\xi_1|^p}+
\frac{\underset{|t_2|\leq\xi_2}{\max}F_1(t_2)}{|\xi_2|^q},\\
B_1:=&\underset{\xi_2\to +\infty}{\limsup}\frac{F_1(\xi_2)}{|\xi_2|^p},\ \text{and}\ B_2:=\underset{\xi_1\to +\infty}{\limsup}\frac{F_2(\xi_1)}{|\xi_1|^q}.
\end{array}
\]
where $B:=B_1+B_2$, $\xi=(\xi_1,\xi_2)$.
If $\underset{(\xi_1,\xi_2)\geq 0}{\inf}F_2(\xi_1)+F_1(\xi_2)=0$ and $A<\Omega m_0 B$, where $\Omega$ is given by \eqref{k},
for every $\lambda\in \Lambda:=\left]\Omega,\frac{1}{\left({pc_1^p}\|\alpha_2\|_1
	+{qc_2^q}\|\alpha_1\|_1\right)A}\right[$,
there exists an unbounded sequence of radially symmetric weak solutions for \eqref{eq1.1} in $X$.
\end{theorem}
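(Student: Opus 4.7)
The plan is to apply Theorem \ref{theo2.1} part $(b)$ to the pair $(\Phi,\Psi)$ restricted to the radial subspace $X$, and then use Palais's principle of symmetric criticality (Theorem \ref{palais}) to lift the resulting critical points to critical points of $I_\lambda$ on the ambient space $W^{1,(p,q)}(\mathbb{R}^N)$, hence to weak solutions of \eqref{eq1.1}. The isometric action of $O(N)\times O(N)$ on $W^{1,(p,q)}(\mathbb{R}^N)$ leaves $\Phi$ and $\Psi$ invariant (since $\alpha_1,\alpha_2$ are radial), so $\mathrm{Fix}(G)=X$ and the Palais reduction is legitimate.

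First I would verify the abstract hypotheses of Theorem \ref{theo2.1}: $X$ is a reflexive Banach space, $\Phi$ is coercive, strongly continuous, sequentially weakly lower semicontinuous and G\^ateaux differentiable, while $\Psi$ is sequentially weakly upper semicontinuous and G\^ateaux differentiable — the last two properties relying crucially on the compact embedding $X\hookrightarrow L^\infty(\mathbb{R}^N)\times L^\infty(\mathbb{R}^N)$ granted by Theorem \ref{thcom}. Next, I would estimate $\gamma=\liminf_{r\to+\infty}\varphi(r)$ from above. Choose sequences $\xi_{1,n},\xi_{2,n}\to+\infty$ realising the $\liminf$ in the definition of $A$, and set $r_n:=\frac{1}{p}(\xi_{1,n}/c_1)^p+\frac{1}{q}(\xi_{2,n}/c_2)^q$, so that the embedding inequality $\|(u,v)\|_\infty\le c\|(u,v)\|_r$ guarantees that $\Phi(u,v)<r_n$ forces $|u|\le\xi_{1,n}$ and $|v|\le\xi_{2,n}$ pointwise. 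Using $\alpha_i\in L^1$ this yields
\[
\sup_{\Phi(u,v)<r_n}\Psi(u,v)\le\|\alpha_2\|_1\max_{|t|\le\xi_{1,n}}F_2(t)+\|\alpha_1\|_1\max_{|t|\le\xi_{2,n}}F_1(t),
\]
and after dividing by $r_n$ the hypothesis $A<+\infty$ delivers $\gamma\le(pc_1^p\|\alpha_2\|_1+qc_2^q\|\alpha_1\|_1)A$. This inclusion guarantees $\Lambda\subset\,]0,1/\gamma[$, so the alternative $(b_1)$--$(b_2)$ is available for every $\lambda\in\Lambda$.

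The delicate part is ruling out $(b_1)$ when $\lambda>\Omega$. To do so I would build an explicit radial test pair $(w_{1,n},w_{2,n})\in X$ supported in $\overline{B(0,D)}$ that equals $(\xi_{1,n},\xi_{2,n})$ on $B(0,D/2)$ and interpolates smoothly in the annulus $D/2\le|x|\le D$, with $\xi_{1,n},\xi_{2,n}\to+\infty$ chosen to realise $B_2$ and $B_1$ in the $\limsup$. A direct computation of $\|w_{i,n}\|_r$ on the annulus produces exactly the geometric constants $\sigma(N,p)/D^p$ and $g(p,N)$ (and the $q$-analogues) that appear in the definition \eqref{k} of $\Omega$; this is what forces the particular shape of $\Omega$ and the $\max$ between the two $p$- and $q$-contributions. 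The hypothesis $\inf_{(\xi_1,\xi_2)\ge 0}(F_1(\xi_2)+F_2(\xi_1))=0$ lets me discard any harmful contribution to $\Psi$ coming from the annular region, so that the integrals over $B(0,D/2)$ dominate and give
\[
\Psi(w_{1,n},w_{2,n})\ge\|\alpha_2\|_{B(0,D/2)}F_2(\xi_{1,n})+\|\alpha_1\|_{B(0,D/2)}F_1(\xi_{2,n}).
\]
Combining these two sides and using $\lambda>\Omega$ together with $A<\Omega m_0 B$ forces $I_\lambda(w_{1,n},w_{2,n})\to-\infty$, excluding the existence of a global minimum.

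The main obstacle I anticipate is precisely the annular cut-off computation that produces the sharp constants $\sigma(N,p)$ and $g(p,N)$: the $p$- and $q$-Laplacian terms demand two independent test sequences that must be optimised simultaneously, which is exactly why $\Omega$ is defined as a maximum. Once $(b_1)$ is excluded, Theorem \ref{theo2.1}$(b_2)$ yields a sequence $\{(u_n,v_n)\}\subset X$ of critical points of $I_\lambda|_X$ with $\Phi(u_n,v_n)\to+\infty$. Coercivity of $\Phi$ then gives $\|(u_n,v_n)\|_r\to+\infty$, and Palais's Theorem \ref{palais} promotes each $(u_n,v_n)$ to a critical point of $I_\lambda$ on the full space, producing the desired unbounded sequence of radially symmetric weak solutions of \eqref{eq1.1}.
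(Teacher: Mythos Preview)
Your overall strategy---verify the regularity hypotheses, bound $\gamma$ via a well-chosen sequence $r_n$, rule out alternative $(b_1)$ by evaluating $I_\lambda$ on piecewise-linear radial test functions supported in $B(0,D)$ and equal to a constant on $B(0,D/2)$, then invoke Theorem~\ref{theo2.1}$(b)$ and Palais---is exactly the route the paper takes.

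There is, however, one concrete slip in your estimate of $\gamma$. With your choice $r_n=\frac{1}{p}(\xi_{1,n}/c_1)^p+\frac{1}{q}(\xi_{2,n}/c_2)^q$, the inequality $\Phi(u,v)<r_n$ does \emph{not} force the separate bounds $\|u\|_\infty\le\xi_{1,n}$ and $\|v\|_\infty\le\xi_{2,n}$: nothing prevents $\|u\|_r^p/p$ from absorbing almost all of $r_n$, making $\|u\|_\infty$ much larger than $\xi_{1,n}$. The paper avoids this by taking $r_n=\min\{r_{1n},r_{2n}\}$ with $r_{1n}=t_{1n}^p/(pc_1^p)$ and $r_{2n}=t_{2n}^q/(qc_2^q)$; then $\Phi(u,v)<r_n$ immediately gives $\|u\|_r^p/p<r_{1n}$ and $\|v\|_r^q/q<r_{2n}$ separately, whence the desired $L^\infty$ bounds and the inclusion \eqref{phii}. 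Once you replace the sum by the minimum, your argument and the paper's coincide essentially line by line, including the explicit computation of $\|H_{in}\|_r$ in terms of $\sigma(N,p)$, $g(p,N)$ and the case split $B<+\infty$ versus $B_1=+\infty$.
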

\begin{proof}
For fixed $\lambda \in \Lambda$, we consider $\Phi$, $\Psi$ and $I_\lambda$ as in the last section. Knowing that $\Phi$ and
$\Psi$ satisfy the regularity assumptions in Theorem \ref{theo2.1}. In order to study the critical points of
$I_\lambda$ in $X$, we show that $\lambda <\frac{1}{\gamma}<+\infty$, where $\gamma=\underset{r\to +\infty}{\liminf}\phi(r)$. Let $\lbrace t_n \rbrace $ be a sequence of positive numbers such that $\lim_{n\to \infty} t_n=+\infty $, $r_{1n}:=\frac{t_{1n}^p}{pc_1^p}$ and $r_{2n}:=\frac{t_{2n}^q}{qc_2^q}$, for all $n \in \mathbb{N}$. Set $r_n=\min\{r_{n1},r_{n2}\}$. Considering Theorem \ref{thcom} (by relation \eqref{embedding}), a computation shows that
\begin{equation}\label{phii}
\begin{aligned}
\Phi^{-1}(]-\infty,r_{n}[)=&\{(z,w)\in X: \Phi(z,w)<r_{n}\}\\
=& \{(z,w)\in X: \frac{\|z\|_r^p}{p}+\frac{\|w\|_r^q}{q}<r_n\}
\\
\subset& \{(z,w)\in X;\, \|(z,w)\|_{\infty}<t_n\},
\end{aligned}
\end{equation}
where $t_n=\min\{t_{n1},t_{n2}\}$.

Since $\Phi(0,0)=\Psi(0,0)=0$, by a computation one can show
\[
\begin{aligned}
\varphi(r_n)&=\inf_{\Phi(u,v)<r_n}\frac{(\sup_{\Phi(z,w)<r_n}\Psi(z,w))-\Psi(u,v)}{r_n-\Phi(u,v)}\\
&\leq \left({pc_1^p}\|\alpha_2\|_1
+{qc_2^q}\|\alpha_1\|_1\right)A.
\end{aligned}
\]
Hence
\[
\gamma\leq \liminf_{n\to +\infty}\varphi(r_n)\leq \left({pc_1^p}\|\alpha_2\|_1
+{qc_2^q}\|\alpha_1\|_1\right)A<+\infty.
\]
Now, we show that $I_\lambda$ is unbounded from below. Let $\{d_{1n}\}$ and $\{d_{2n}\}$ be two sequences of positive numbers such that $\lim_{n \rightarrow +\infty}d_{1n}=\lim_{n \rightarrow +\infty}d_{2n}=+\infty$ and
\begin{equation}\label{B}
B_1=\lim_{n\rightarrow +\infty}\frac{F_1(d_{2n})}{d_{2n}^q},\ B_2=\lim_{n\rightarrow +\infty}\frac{F_2(d_{1n})}{d_{1n}^p}
\end{equation}
Define $\{(H_{1n}, H_{2n})\} \in X $ by
\begin{equation*}
 H_{in}(x):=\left\{\begin{array}{lll}
\displaystyle0  & \quad \mathbb{R^N}\setminus B(0,D)\\
\displaystyle d_{in}  &  \quad B(0,\frac{D}{2})
\\
\displaystyle \frac{2d_{in}}{D}(D-|x|)  & \quad B(0,D)\setminus
B(0,\frac{D}{2}),
\end{array}\right.
\end{equation*}
for every $n \in \mathbb{N}$ and $i=1,2$. By a similar argument and computations in \cite[P.1017]{candito} one can show that
\[
\begin{array}{rl}
\|H_{2n}\|_r^q=&d_{2n}^qm(D)\left(\frac{\sigma(N,p)}{D^q}+g(q,N)\right),\ \text{and}\\
\|H_{1n}\|_r^p=&d_{1n}^pm(D)\left(\frac{\sigma(N,p)}{D^p}+g(p,N)\right).
\end{array}
\]
Condition $(i)$, implies
\[
\begin{array}{rl}
\int_{\mathbb{R^N}}\alpha_1(x)F_1(H_{2n}(x))dx\geq&\int_{B(0,\frac{D}{2})}\alpha_1(x)F_1(d_{2n})dx=F_1(d_{2n})\|\alpha_1\|_{B(0,\frac{D}{2})},\ \text{and}\\
\int_{\mathbb{R^N}}\alpha_2(x)F_2(H_{1n}(x))dx\geq&\int_{B(0,\frac{D}{2})}\alpha_2(x)F_2(d_{1n})dx=F_2(d_{1n})\|\alpha_2\|_{B(0,\frac{D}{2})},
\end{array}
\]
for every $n \in N$. Then
\[
\begin{array}{rl}
I_\lambda(H_{1n},H_{2n})=&\Phi(H_{1n},H_{2n})-\lambda \Psi(H_{1n},H_{2n})\\
=&\frac{\|H_{1n} \|_r^p}{p}+\frac{\|H_{2n} \|_r^q}{q}-\lambda\int_{\mathbb{R}^N}\alpha_1(x)F_1(H_{2n}(x))dx
-\lambda\int_{\mathbb{R}^N}\alpha_2(x)F_2(H_{1n}(x))dx\\
\leq&\frac{d_{1n}^pm(D)\left(\frac{\sigma(N,p)}{D^P}+g(p,N)\right)}{p}
+\frac{d_{2n}^qm(D)\left(\frac{\sigma(N,q)}{D^q}+g(q,N)\right)}{q}\\
&\qquad \qquad-\lambda\left(
F_1(d_{2n})\|\alpha_1\|_{B(0,\frac{D}{2})}+F_2(d_{1n})\|\alpha_2\|_{B(0,\frac{D}{2})}.
\right)
\end{array}
\]
If $B<+\infty$ ($B_1,B_2<+\infty$), the conditions \eqref{B} implies that
\[
\begin{array}{l}
\text{there exists}\ N_1\ \text{such that for all}\ n\geq N_1\ \text{we have}\ F_1(d_{2n})>\varepsilon B_1 d_{2n}^p,\ \text{and}\\
\text{there exists}\ N_2\ \text{such that for all}\ n\geq N_2\ \text{we have}\ F_2(d_{1n})>\varepsilon B_2 d_{1n}^q.
\end{array}
\]
Then for every $n\geq N_{\varepsilon}:=\max\{N_1,N_2\}$,
\[
\begin{array}{rl}
I_\lambda(H_{1n},H_{2n})
\leq&\frac{d_{1n}^pm(D)\left(\frac{\sigma(N,p)}{D^P}+g(p,N)\right)}{p}
+\frac{d_{2n}^qm(D)\left(\frac{\sigma(N,q)}{D^q}+g(q,N)\right)}{q}\\
&\qquad \qquad-\lambda\varepsilon\left(
d_{1n}^pB_2\|\alpha_1\|_{B(0,\frac{D}{2})}+d_{2n}^qB_2\|\alpha_1\|_{B(0,\frac{D}{2})}.
\right)\\
=&d_{1n}^p\left(\frac{m(D)\left(\frac{\sigma(N,p)}{D^P}+g(p,N)\right)}{p}
-\lambda\varepsilon
B_1\|\alpha_2\|_{B(0,\frac{D}{2})}\right)\\
&\qquad \qquad +d_{2n}^q\left(\frac{m(D)\left(\frac{\sigma(N,q)}{D^q}+g(q,N)\right)}{q}-\lambda\varepsilon B_2\|\alpha_1\|_{B(0,\frac{D}{2})}\right).
\end{array}
\]
If we set $\Omega:=\max \left\{\frac{m(D)\left(\frac{\sigma(N,p)}{D^p}+g(p,N)\right)}{p\lambda B_1\|\alpha_2\|_{B(0,\frac{D}{2})}},\frac{m(D)\left(\frac{\sigma(N,q)}{D^q}+g(q,N)\right)}{q\lambda B_2\|\alpha_1\|_{B(0,\frac{D}{2})}}\right\}$, then for $
\varepsilon\in\left(\Omega, 1\right)$
one can get $\lim_{n\to +\infty} I_\lambda(H_{1n},H_{2n})=-\infty$.\\
If at least one of the $B_1$ or $B_2$ are $+\infty$. Let $B_1=+\infty$, and consider
$M_1>\Omega$, then by \eqref{B} there exists $N_{M_1}$ such that for every $n>N_{M_1}$, we have $F_1(d_{1n})>M_1d_{1n}^p$. Moreover, for every $n>N_{M_1}$
\[
\begin{array}{rl}
I_\lambda(H_{1n},H_{2n})
\leq&\frac{d_{1n}^pm(D)\left(\frac{\sigma(N,p)}{D^P}+g(p,N)\right)}{p}
+\frac{d_{2n}^qm(D)\left(\frac{\sigma(N,q)}{D^q}+g(q,N)\right)}{q}\\
&\qquad \qquad-\lambda\left(
d_{1n}^pM_1\|\alpha_2\|_{B(0,\frac{D}{2})}+d_{2n}^qM_1\|\alpha_1\|_{B(0,\frac{D}{2})}.
\right)\\
=&d_{1n}^p\left(\frac{m(D)\left(\frac{\sigma(N,p)}{D^P}+g(p,N)\right)}{p}
-\lambda M_1\|\alpha_2\|_{B(0,\frac{D}{2})}\right)\\
&\qquad \qquad +d_{2n}^q\left(\frac{m(D)\left(\frac{\sigma(N,q)}{D^q}+g(q,N)\right)}{q}-\lambda M_1\|\alpha_1\|_{B(0,\frac{D}{2})}\right).
\end{array}
\]
This implies that $\lim_{n\to +\infty} I_\lambda(H_{1n},H_{2n})=-\infty$.

Now, Theorem \ref{theo2.1} ($b$) implies, the functional $I_\lambda$ admits an unbounded sequence $\{u_n\}\subset X$ of critical points.
Considering Theorem \ref{palais}, these critical points are also critical points for the smooth and $O(N)$-invariant functional $I_\lambda:W^{1,p}(\mathcal{R}^N)\to \mathcal{R}$. Therefore, there is a sequence of radially symmetric weak solutions for the problem \eqref{eq1.1}, which are unbounded in $W^{1,p}(\mathcal{R}^N)$.
\end{proof}
Here we prove our second result which says that under different conditions the problem \eqref{eq1.1} has a sequence of
weak solutions, which converges weakly to zero.
\begin{theorem}\label{theo12}
Let $f_i:\mathbb{R}\rightarrow\mathbb{R}$ be two continuous and radially symmetric functions. Set
\[
\begin{array}{rl}
A':=&\underset{(\xi_1,\xi_2)\to 0^+}{\liminf}\frac{\underset{|t_1|\leq \xi_1}{\max}F_2(t_1)}{|\xi_1|^p}+
\frac{\underset{|t_2|\leq\xi_2}{\max}F_1(t_2)}{|\xi_2|^q},\\
B_1':=&\underset{\xi_2\to 0^+}{\limsup}\frac{F_1(\xi_2)}{|\xi_2|^p},\ \text{and}\ B_2':=\underset{\xi_1\to 0^+}{\limsup}\frac{F_2(\xi_1)}{|\xi_1|^q},
\end{array}
\]
where $B':=B_1'+B_2'$, $\xi=(\xi_1,\xi_2)$. If $\underset{(\xi_1,\xi_2)\geq 0}{\inf}F_2(\xi_1)+F_1(\xi_2)=0$ and $A'<\Omega m_0 B'$,
where $\Omega$ is given by \eqref{k},
for every $\lambda\in \Lambda':=\left]\Omega,\frac{1}{\left({pc_1^p}\|\alpha_2\|_1
+{qc_2^q}\|\alpha_1\|_1\right)A'}\right[$,
there exists an unbounded sequence of radially symmetric weak solutions for \eqref{eq1.1} in $X$.
\end{theorem}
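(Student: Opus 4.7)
The plan is to replay the proof of Theorem \ref{theo1} at the \emph{lower} end of the spectrum, invoking part $(c)$ of Theorem \ref{theo2.1} instead of part $(b)$ and treating $A',B_1',B_2'$ as the $0^+$-analogues of $A,B_1,B_2$. Since $\inf_X\Phi=\Phi(0,0)=0$, the relevant parameter is now $\delta=\liminf_{r\to 0^+}\varphi(r)$. The functionals $\Phi,\Psi,I_\lambda$ on $X$ enjoy exactly the regularity required by Theorem \ref{theo2.1}, as already recorded in Section 1.

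First I would bound $\delta$. Picking positive sequences $t_{1n},t_{2n}\to 0^+$ and setting $r_{1n}:=t_{1n}^p/(pc_1^p)$, $r_{2n}:=t_{2n}^q/(qc_2^q)$, $r_n:=\min\{r_{1n},r_{2n}\}$, Morrey's embedding \eqref{embedding} gives, as in \eqref{phii},
\[
\Phi^{-1}\!\bigl(\,]-\infty,r_n[\,\bigr)\subset\{(z,w)\in X:\|(z,w)\|_\infty<t_n\}.
\]
Using $\Phi(0,0)=\Psi(0,0)=0$ and the definition of $A'$, the same quotient manipulation as in the proof of Theorem \ref{theo1} yields
\[
\varphi(r_n)\leq\bigl(pc_1^p\|\alpha_2\|_1+qc_2^q\|\alpha_1\|_1\bigr)A',
\]
so $\delta<+\infty$ and every $\lambda\in\Lambda'$ automatically satisfies $\lambda<1/\delta$.

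Next I would rule out alternative $(c_1)$ of Theorem \ref{theo2.1}, i.e.\ I would show that $(0,0)$ is \emph{not} a local minimum of $I_\lambda$. The capped cone test functions $H_{1n},H_{2n}$ from the proof of Theorem \ref{theo1} work again, now with shrinking heights $d_{1n},d_{2n}\to 0^+$ realising the $\limsup$ in the definitions of $B_1',B_2'$. The norm identities for $\|H_{1n}\|_r^p$ and $\|H_{2n}\|_r^q$ are unchanged, and the hypothesis $\inf_{(\xi_1,\xi_2)\geq 0}(F_2(\xi_1)+F_1(\xi_2))=0$ again gives $\int_{\mathbb{R}^N}\alpha_i F_i(H_{jn})\,dx\geq F_j(d_{jn})\|\alpha_i\|_{B(0,D/2)}$. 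Choosing $\varepsilon\in(\Omega,1)$ (or $M>\Omega$ when $B_1'$ or $B_2'$ equals $+\infty$), the inequality $A'<\Omega m_0 B'$ forces the coefficients of $d_{1n}^p$ and $d_{2n}^q$ in the resulting upper bound for $I_\lambda(H_{1n},H_{2n})$ to be strictly negative, so $I_\lambda(H_{1n},H_{2n})<0=I_\lambda(0,0)$ for $n$ large, while $(H_{1n},H_{2n})\to(0,0)$ strongly in $X$. Hence $(0,0)$ is not a local minimum of $I_\lambda$.

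Alternative $(c_2)$ then produces a sequence $\{(u_n,v_n)\}\subset X$ of pairwise distinct critical points of $I_\lambda$. Palais's principle (Theorem \ref{palais}), applied to the isometric $O(N)\times O(N)$-action on $W^{1,(p,q)}(\mathbb{R}^N)$, upgrades them to critical points on the full product space, i.e.\ to the radially symmetric weak solutions of \eqref{eq1.1} claimed in the statement. The hardest part is the constant bookkeeping carried by \eqref{k} together with $A'<\Omega m_0 B'$: one has to verify that the same margin simultaneously makes $\Lambda'$ nonempty, keeps $(\Omega,1)$ nonempty so that a legal $\varepsilon$ exists, and renders \emph{both} $d_{in}^{p_i}$-coefficients in the $I_\lambda$ estimate strictly negative for every $\lambda\in\Lambda'$. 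A secondary subtlety is squaring $(c_2)$'s weak convergence to $(0,0)$ with the word ``unbounded'' appearing in the statement: since the hypotheses only describe $F_i$ near $0$, there is no mechanism to push $(u_n,v_n)$ to infinity in norm, so the conclusion must be read in the sense of countably infinite, pairwise distinct solutions.
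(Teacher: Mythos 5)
Your proposal follows essentially the same route as the paper: bound $\delta=\liminf_{r\to 0^+}\varphi(r)$ via the test radii $r_n\to 0^+$, rule out alternative $(c_1)$ of Theorem \ref{theo2.1} by showing $I_\lambda(H_{1n},H_{2n})<0$ with shrinking heights $d_{in}\to 0^+$, and conclude via $(c_2)$ and Palais's principle; in fact you supply more detail than the paper, which merely says ``a computation similar to the one in Theorem \ref{theo1}''. Your closing observation is also correct: the word ``unbounded'' in the statement is an artifact of copying Theorem \ref{theo1}, since the paper's own proof (and its introductory sentence) concludes a sequence of solutions converging weakly to zero with $\Phi(u_n)\to 0$.
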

\begin{proof}
For fixed $\lambda \in \Lambda'$, we consider $\Phi$, $\Psi$ and $I_\lambda$ as in Section 2. Knowing that
$\Phi$ and $\Psi$ satisfy the regularity assumptions in Theorem \eqref{theo2.1}, we show that $\lambda <\frac{1}{\delta}$.
We know that $\inf_X\Phi =0$. Set $\delta:=\liminf_{r\to 0^+}\varphi(r)$.  A computation similar to the one in the Theorem \ref{theo1} implies $\delta<\infty$ and if $\lambda \in \Lambda'$ then $\lambda <\dfrac{1}{\delta}$. A compaction (similar in the Theorem \ref{theo1}) shows that
$I_\lambda(H_{1n}, H_{2n})<0$ for $n$ large enough and thus zero is not a local minimum of $I_\lambda$.  Therefore, there exists a sequence $\{u_n\}\subset X$ of critical points of $I_\lambda$ which converges weakly to zero in $X$ as  $\lim_{n\to+\infty}\Phi(u_n)=0$.\\
Again, considering Theorem \ref{palais}, these critical points are also critical points for the smooth and
$O(N)$-invariant functional $I_\lambda:W^{1,p}(\mathcal{R}^N)\to \mathcal{R}$. Therefore, there is a sequence
of radially symmetric weak solutions for the problem \eqref{eq1.1}, which converges weakly to
zero in $W^{1,p}(\mathcal{R}^N)$.
\end{proof}

\section*{Acknowledgements}
The first author is partially supported by I.N.D.A.M - G.N.A.M.P.A. 2019 and the ``RUDN University Program 5-100''.

\end{document}